\documentclass[a4paper,abstract=true]{scrartcl}
\usepackage[utf8]{inputenc}
\usepackage{amsfonts,amsthm,amssymb,amsmath}
\usepackage[inline]{enumitem}
\usepackage{color}
\usepackage{tikz}

\newtheorem{ctr}{}[section]
\newtheorem{theorem}[ctr]{Theorem}

\newtheorem{corollary}[ctr]{Corollary}

\theoremstyle{remark}
\newtheorem{remark}[ctr]{Remark}
\newtheorem{question}[ctr]{Question}

\newcommand{\Aut}{\operatorname{Aut}}

\DeclareMathOperator{\wcop}{wCop}
\DeclareMathOperator{\scop}{sCop}

\title{On weak cop numbers of transitive graphs}
\author{Florian Lehner}
\begin{document}

\maketitle
\begin{abstract}
The weak cop number of infinite graphs can be seen as a coarse-geometric analogue to the cop number of finite graphs. We show that every vertex transitive graph with at least one thick end has infinite weak cop number. It follows that every connected, vertex transitive graph has weak cop number $1$ or $\infty$, answering a question posed by Lee, Martínez-Pedroza, and Rodríguez-Quinche, and reiterated in recent preprints by Appenzeller and Klinge, and by Esperet, Gahlawat, and Giocanti.
\end{abstract}

\section{Introduction}

The Cops-and-Robber game is a two-player perfect information game played on a (usually finite) graph as follows.
Initially, the first player places $k$ playing pieces, the cops, on the vertices of a graph $G$. Then the second player places his playing piece, the robber, on a vertex. We refer to the first player as the cops, and the second player as the robber. Once initial positions are established, the two players take turns. On the cops' turn, each cop can either move to an adjacent vertex or stay at the current position. On the robber’s turn, the robber can either move to an adjacent vertex or stay where they are.  The cops win the game if at some point one of the cops is at the same vertex as the robber, in this case we say that the robber is caught.

Too many variants of this game have been studied in the literature to even attempt to list them all here; instead, we refer the reader to the monograph \cite{zbMATH05945963}, or the more recent \cite{zbMATH07558343} for an introductory overview. One of the main parameters of interest in each of these variants is the associated cop number of a graph $G$, that is, the minimum number $k$ of cops required for the cops to have a winning strategy on $G$.

This short note focuses on a coarse-geometric variant of the game which is played on infinite graphs and the associated cop number. This variant was recently introduced by Lee, Martínez-Pedroza, and Rodríguez-Quinche \cite{zbMATH07745147} and is defined as follows.
There are four non-negative integer parameters, chosen by the players: the cop speed $s_c$, the robber speed $s_r$, the capture radius $\rho$, and the robber reach $R$. Once the parameters are chosen, the game proceeds as follows. In the first round, the cops, and then the robber choose their respective starting vertices. In every subsequent round, the cop player may move each cop to any vertex at distance $\leq s_c$ from their current position, and then the robber player moves the robber along a path of length at most $s_r$ starting at their current position. We say that the robber wins the game, if they manage to visit the ball of radius $R$ centred at some fixed vertex $v$ infinitely many times while never being at distance $\leq \rho$ to any cop (including at interior vertices of the path along which the robber moves). The cops win the game if they prevent this from happening. 

In order to rule out trivialities such as setting the opposing player's speed to $0$, the cops choose $s_c$, and the robber chooses $s_r$. Moreover, $s_c$ must be chosen before $s_r$ because $s_c > s_r$ would trivially lead to a certain cop-win even for a single cop. Further note that it is never in the robber's interest to pick $\rho > 0$. Hence, in order to make $\rho$ non-trivial, it must be chosen by the cops. Finally $R$ must be chosen after $\rho$, and it must be chosen by the robber, because $R \leq \rho$ is a trivial cop win, and the choice of $v$ does not matter as long as $v$ is fixed before $R$ is chosen.

This leaves two possible variants:
\begin{itemize}
    \item in the \emph{weak game}, the cops choose $\rho$ before the robber picks $s_r$;
    \item in the \emph{strong game}, the cops choose $\rho$ after the robber picks $s_r$.
\end{itemize}
The \emph{weak cop number} $\wcop(G)$ and \emph{strong cop number} $\scop(G)$ of a graph $G$ denote the minimum number of cops required to guarantee a win in the weak and strong game, respectively. Note that $\wcop(G)$ and $\scop(G)$ can be infinite.

Both $\wcop$ and $\scop$  are known to be invariant under quasi-isometries \cite{zbMATH07745147}, and hence can be thought of as generalisations of the cop number to the realm of coarse graph theory recently introduced by Georgakopoulos and Papasoglu \cite{georgakopoulos2023graphminorsmetricspaces}. The two parameters were further studied in recent preprints \cite{appenzeller2025copsrobbershyperbolicvirtually,esperet2025coarsecopsrobbergraphs}, both of which independently prove that $\scop(G) = 1$ if and only if $G$ is Gromov-hyperbolic.

The following question is one of the main open problems concerning weak and strong cop numbers, first stated in \cite[Question L]{zbMATH07745147}, and reiterated for the special case of Cayley graphs in \cite[Question K]{appenzeller2025copsrobbershyperbolicvirtually} and \cite[Question 1]{esperet2025coarsecopsrobbergraphs}.

\begin{question}
    \label{qu:main}
    Is there a connected vertex transitive graph $G$ for which $\wcop(G) \notin \{1,\infty\}$ or $\scop(G) \notin \{1,\infty\}$?
\end{question}

We show that any vertex transitive graph $G$ containing a thick end has infinite weak cop number. As a consequence, any connected vertex transitive graph $G$ satisfies $\wcop(G) \in \{1,\infty\}$, and thus Question \ref{qu:main} has a negative answer for $\wcop$.

\section{Notation}

We denote the vertex set of a graph $G$ by $V(G)$ and the edge set by $E(G)$. All graphs are assumed to be \emph{locally finite}, that is, every vertex has only finitely many neighbours. 

A \emph{ray} in an infinite, locally finite graph is a one-sided infinite path. An \emph{end} of $G$ is an equivalence class of rays where two rays are equivalent if they are connected by an infinite number of disjoint paths. The \emph{degree} of an end $\omega$ is the supremum of the sizes of families of vertex disjoint rays in $\omega$. An end is called \emph{thin} if it has finite degree, and \emph{thick} otherwise.

The \emph{automorphism group} of a graph $G$, denoted by $\Aut G$, is the group of bijections of $V(G)$ which preserve both adjacency and non-adjacency. A graph is called vertex transitive, if its automorphism group acts transitively on the set of vertices, that is, for any pair $u,v$ of vertices there is some $g \in \Aut G$ such that $g u = v$.

Let $d$ denote the usual geodesic distance on $V(G)$, that is $d(u,v)$ is the length of a shortest $u$--$v$-path in $G$. The \emph{ball} of radius $r$ around a vertex $v$, is defined as $B(r,v) = \{u\in V(G)\mid d(u,v) \leq r\}$; the \emph{sphere} of radius $r$ around a vertex $v$, is defined as $S(r,v) = \{u\in V(G)\mid d(u,v) = r\}$. Note that in a vertex transitive graph, the size of a ball only depends on the radius, not on the centre, and hence we may unambiguously define $b(n) = |B(n,v)|$ for every $n \in \mathbb N$.

\section{Results}

\begin{theorem}
    \label{thm:main}
    If a vertex transitive graph has at least one thick end, then it has infinite weak cop number.
\end{theorem}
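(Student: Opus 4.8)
The plan is to prove the statement directly: fix an \emph{arbitrary} number $k$ of cops and exhibit a winning strategy for the robber in the weak game. Since $k$ is arbitrary, this yields $\wcop(G) = \infty$.

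First I would use the weak game's information structure to reduce the problem to a clean geometric one. In the weak game the cops must commit to $s_c$ and $\rho$ before the robber chooses $s_r$ and $R$, so suppose $s_c$ and $\rho$ are fixed, and set $\rho' := \rho + s_c$ and $N := k\,b(\rho')$. At any moment, with cops at $c_1,\dots,c_k$, put $F := \bigcup_i B(\rho,c_i)$ and $F^+ := \bigcup_i B(\rho',c_i)$, so $|F| \le |F^+| \le N$. I would have the robber maintain the invariant that after each of its moves it rests at a vertex outside $F^+$: such a vertex has distance $> \rho + s_c$ to every cop, so after the cops' next (length-$\le s_c$) move it still has distance $> \rho$ to every cop and is not caught; during a move it suffices that every vertex of the traversed path lie outside $F$. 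Thus the game reduces to a purely geometric task: moving arbitrarily fast, the robber must forever dodge a set of at most $N$ forbidden vertices that drifts by at most $s_c$ per round, while visiting $B(R,v)$ infinitely often.

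The thickness of the end is exactly what enables this. By Halin's grid theorem, a connected locally finite graph with a thick end contains a subdivision of the infinite half-grid whose rays lie in that end; I would take a finite subdivided subgrid $\Gamma$ both of whose side-lengths exceed $N$, and moreover with $|V(\Gamma)| \gg N\,b(\rho')$. Using vertex-transitivity I translate $\Gamma$ by an automorphism so that one of its vertices is the fixed base point $v$, and then let the robber choose $R$ so large that $\Gamma \subseteq B(R,v)$ and $s_r \ge \operatorname{diam}_G \Gamma$. The decisive feature of a grid is that its Menger connectivity between opposite sides equals its width, hence exceeds $N$, \emph{even though} its vertex connectivity is bounded by the (finite) degree. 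Consequently, for every $F$ with $|F| \le N$, the graph $\Gamma - F$ cannot be split into two large pieces: it has a single giant component containing all but boundedly many of its vertices, and this component always contains vertices lying outside $F^+$. The robber then confines itself to $\Gamma$: each round it reads the cop positions, computes $F$ and $F^+$, and dashes along a path inside the giant component of $\Gamma - F$ from its current vertex to a vertex of that component outside $F^+$. This path has length $\le \operatorname{diam}_G \Gamma \le s_r$, keeps all interior vertices outside $F$, and ends deep outside $F^+$; since $\Gamma \subseteq B(R,v)$, every such move is a visit to $B(R,v)$. The robber is thus never caught and visits $B(R,v)$ in every round, a win against $k$ cops.

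The heart of the argument, and the step I expect to demand the most care, is the routing property: that $N$ forbidden balls, even moving adaptively and slowly, can neither disconnect the robber from a safe resting vertex of $\Gamma$ nor corner the fast robber, so that a safe dash exists in \emph{every} round forever. In particular one must check that the robber stays in the giant component under the cops' slow drift, which is where resting \emph{deep} (distance $>\rho'$ from all cops) rather than merely outside $F$ is used. This is precisely the point at which thickness enters: it furnishes a bounded-diameter region whose side-to-side connectivity far exceeds anything $k$ cops of fixed capture radius can block, and it is exactly this feature that fails for thin ends. A secondary technical obstacle is extracting a genuine large finite grid inside a single ball from the bare definition of a thick end as one carrying arbitrarily many disjoint rays, for which one invokes Halin's grid theorem, or directly weaves the disjoint rays of $\omega$ together using the infinitely many disjoint paths connecting them.
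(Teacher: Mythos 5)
Your opening reduction is sound and in fact coincides with the paper's setup: resting at a vertex outside $F^+$ (the paper's ``safe'' vertices) guarantees survival of the cops' next move, and dashes need only avoid $F$ (the paper's ``open'' vertices). The genuine gap is the lemma your whole routing scheme rests on: that for every $F$ with $|F|\le N$ the graph $\Gamma - F$ has a single giant component containing all but boundedly many of \emph{its} vertices. For a \emph{subdivision} of a grid — which is all Halin's theorem gives you — this is false as stated: a single subdivided edge of $\Gamma$ may be an arbitrarily long path, and deleting just its two interior vertices nearest the branch vertices cuts off unboundedly many vertices of $\Gamma$. Side-to-side Menger connectivity exceeding $N$ does not rule this out; the true statement concerns \emph{branch} vertices ($|F|\le N$ meets at most $O(N)$ branch vertices and subdivided edges, so by grid isoperimetry all but $O(N^2)$ branch vertices lie in one component). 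A second unproven assertion compounds this: you claim resting ``deep'' (outside $F^+$) keeps the robber in the giant component after the cops' drift, but it does not — the two cut vertices of the subdivided edge carrying the robber can lie in $F^+\setminus F$, far from the robber in $G$-distance, and become blocked on the cops' very next move, trapping the robber on a long finite path where two cops then squeeze it. You flag this step as the heart of the argument, but flagging is not proving, and the invariant fails in the form you maintain it.

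The argument is repairable: have the robber rest only at branch vertices lying in the giant component computed with respect to $F^+$ rather than $F$; since the new forbidden set satisfies $F_{\mathrm{new}}\subseteq F^+$, such a vertex provably remains in the giant component after the cops move, and a counting argument supplies a safe branch vertex to dash to. (Also, $s_r \ge \operatorname{diam}_G \Gamma$ is insufficient, since your dash must stay inside $\Gamma - F$ and such detours can far exceed $G$-geodesics; take $s_r = |V(\Gamma)|$.) It is worth noting that the paper avoids all of this machinery: it never invokes Halin's grid theorem, instead working directly with $k\cdot b(s_c+\rho)+1$ disjoint rays of the thick end (yielding a ``haven'' each round by pigeonhole) and a fixed nested sequence of $k\cdot b(\rho)+1$ annuli, one of which is entirely open each round and supplies the connecting path between the old and new haven. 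Your approach, once the giant-component lemma is correctly stated and proved, is a genuinely different and workable route, but as written the central lemma is both missing and false in the stated form.
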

\begin{proof}
    We give a winning strategy for the robber against any finite number $k$ of cops. The cop player decides on the cop speed $s_c$ and the capture radius $\rho$ before the robber has to make any decision, so we may assume that these parameters are fixed throughout the rest of the proof. Fix a root vertex $v_0 \in V(G)$ and a thick end $\omega$ of $G$. Since all spheres and balls in the remainder of the proof are centred at $v_0$, let us define $B(n) = B(n,v_0)$ and $S(n) = S(n,v_0)$. 

    In order to choose the robber speed $s_r$, we first recursively define a sequence $R_i$ as follows. Choose $R_0$ such that there are $k \cdot b(s_c+\rho)+1$ vertex disjoint rays in $\omega$ starting in $B(R_0)$. Note that this is possible since $\omega$ is a thick end. Assuming that $R_i$ is already defined, let $X_i$ be the set of vertices in $S(R_i+1)$ that lie on some ray in $\omega$ which does not meet $B(R_i)$. Since any two rays in $\omega$ are connected by infinitely many disjoint paths, any two vertices in $X_i$ are connected by a path in $G\setminus B(R_i)$. Since $X_i$ is finite, it is possible to choose $R_{i+1}$ so that  any two vertices in $X_i$ are connected by a path in $ B(R_{i+1})\setminus B(R_i)$. Finally, we set $s_r = b(R_{k \cdot b(\rho)+1})$.

    Call a vertex \emph{open} if its distance from any cop is larger than $\rho$, and \emph{safe} if its distance from any cop is larger than $s_c+\rho$. Note that every safe vertex is open and remains open even after the cops' next move. Call a vertex in $B(R_0)$ a \emph{haven} if it is the starting vertex of a ray in $\omega$ all of whose vertices are safe. 
    
    Note that havens exist: indeed, by the definition of $R_0$, there are more than $k \cdot b(s_c+\rho)$ vertex disjoint rays in $\omega$ starting in $B(R_0)$. Since each cop makes only $b(s_c+\rho)$ vertices unsafe, at least one of those rays consists entirely of safe vertices. An analogous argument shows that at least one of the sets $B(R_{i}) \setminus B(R_{i-1})$ for $i \leq k \cdot b(\rho)+1$ consists entirely of open vertices.
    
    The robber's strategy is to always move to a haven; if this is possible, then it clearly allows the robber to visit $B(R_0)$ infinitely many times, thereby winning the game for $R = R_0$. Since havens exist regardless of the cops' positions, the robber may choose to start at a haven. 
    
    Assume now that the robber is at a haven $v$ after the $t$-th robber move, and let $P$ be a ray in $\omega$ starting at $v$ and containing only safe vertices. After the cops' $(t+1)$-th move,  all vertices of $P$ are still open, and  there is a new haven $w$. Let $Q$ be a ray in $\omega$ starting at $w$ and consisting entirely of safe (and hence open) vertices. Choose $i \leq k \cdot b(\rho)+1$ such that all vertices of $B(R_{i}) \setminus B(R_{i-1})$ are open. By definition of $X_{i-1}$, the rays $P$ and $Q$ contain vertices $p$ and $q$ in $X_{i-1}$, respectively. The path obtained by following $P$ from $v$ to $p$, then a path from $p$ to $q$ in $B(R_{i}) \setminus B(R_{i-1})$, and finally $Q$ from $q$ to $w$ (potentially removing cycles and backtracking subpaths) consists entirely of open vertices. Since this path is contained in $B(R_{i}) \subseteq B(R_{k \cdot b(\rho)+1})$, its length cannot be larger than $s_r$, and thus the robber can use this path to move to the haven $w$.
\end{proof}

\begin{remark}
    In the above proof, vertex transitivity is only used to bound the number of vertices a cop can prevent from being safe or open. The same proof still works for graphs containing a thick end where there is some bound $b(n)$ on the size of $n$-balls.
\end{remark}

\begin{corollary}
    If $G$ is a connected, vertex transitive graph, then $\wcop (G) \in \{1,\infty\}$.
\end{corollary}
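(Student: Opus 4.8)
The plan is to combine Theorem~\ref{thm:main} with the known structure theory of transitive graphs, splitting into two extremes: graphs with a thick end (handled by the theorem) and graphs quasi-isometric to trees. First I would dispose of the thick-end case: if $G$ has a thick end then Theorem~\ref{thm:main} immediately gives $\wcop(G)=\infty$, so it remains to treat connected vertex transitive graphs with \emph{no} thick end and to show that these have weak cop number exactly $1$. It is also worth recording that $\wcop(G)\ge 1$ always, since against zero cops the robber wins trivially by sitting on a vertex of $B(R,v)$ (it then visits $B(R,v)$ at every step and is vacuously never within $\rho$ of a cop); this is what makes the dichotomy ``$1$ or $\infty$'' rather than ``$0$, $1$, or $\infty$''.

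For the no-thick-end case I would invoke the structural fact that a connected, locally finite, vertex transitive graph without a thick end is quasi-isometric to a tree: finite graphs are quasi-isometric to a point, two-ended graphs to a line, and the accessibility of transitive graphs (Thomassen--Woess) together with the characterisation of graphs quasi-isometric to trees (Kr\"on--M\"oller) handles the remaining case of infinitely many thin ends. Since $\wcop$ is invariant under quasi-isometries, as recalled in the introduction, it then suffices to prove $\wcop(T)=1$ for every tree $T$, a single vertex counting as a degenerate tree.

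To establish $\wcop(T)=1$ I would exhibit an explicit one-cop winning strategy with cop speed $s_c=1$ and capture radius $\rho=0$. Fix the root $v_0$ determining the target balls and let the robber commit to $s_r$ and $R$. The cop starts at $v_0$ and always maintains a vertex $c$ on the unique $v_0$--$x$ path to the robber's current position $x$, moving one step from $c$ towards $x$ \emph{before} the robber moves. The key invariant is that after each cop move the robber lies in the component of $T\setminus\{c\}$ not containing $v_0$: to leave this component the robber's path would have to pass through $c$, where $\rho=0$ captures it. Because the cop always advances towards the robber, $d(v_0,c)$ strictly increases, so after at most $R$ of its moves we have $d(v_0,c)\ge R$; from then on every vertex available to the robber satisfies $d(v_0,\cdot)>R$, so the robber reaches $B(R,v_0)$ only finitely often and the cop wins. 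Combining the two cases gives $\wcop(G)\in\{1,\infty\}$.

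The main obstacle I anticipate is not the pursuit argument on trees, which is routine, but justifying the structural reduction cleanly: one must be certain that the relevant ``thin'' transitive graphs really are quasi-isometric to trees, and that quasi-isometry invariance is being applied to the correct possibly-infinite-valued quantity (so that $\wcop(G)=\wcop(T)$ even when $T$ is a point or a line). A secondary subtlety, easy to get wrong if stated carelessly, is the order of moves in the pushing step: because the cop moves before the robber it can always step into the branch currently containing the robber, and this is exactly what prevents the fast robber from slipping past $c$ into a sibling subtree.
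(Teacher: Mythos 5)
Your proposal is correct and uses the same two-case decomposition as the paper: if $G$ has a thick end then $\wcop(G)=\infty$ by Theorem~\ref{thm:main}, and if all ends are thin then $G$ is quasi-isometric to a tree by the structure theory of transitive graphs (the paper cites a single reference for this where you assemble it from Thomassen--Woess accessibility and the Kr\"on--M\"oller characterisation, but the content is identical). The one genuine difference is the last step: the paper simply cites the preprints of Appenzeller--Klinge and of Esperet, Gahlawat, and Giocanti for the easy implication that quasi-isometry to a tree gives weak cop number $1$, whereas you make this self-contained by combining the quasi-isometry invariance of $\wcop$ (from Lee, Mart\'inez-Pedroza, and Rodr\'iguez-Quinche, as recalled in the introduction) with an explicit one-cop strategy on trees: cop speed $s_c=1$, capture radius $\rho=0$, the cop advancing along the unique $v_0$--$x$ path, with the invariant that after each cop move the robber is confined to the component of $T\setminus\{c\}$ not containing $v_0$. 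Your pursuit argument is sound, and you correctly flag the two points where it could go wrong: the cop moving \emph{first} in each round is what lets it step into the robber's branch before the robber reacts, and the rule that capture is checked at interior vertices of the robber's path is what prevents a fast robber (arbitrary $s_r$) from sweeping past $c$ in a single turn; since $d(v_0,c)$ strictly increases with each cop move, after more than $R$ moves every vertex available to the robber lies outside $B(R,v_0)$, so the robber visits the target ball only finitely often. One trivial caveat: on a finite tree the strict increase of $d(v_0,c)$ cannot continue forever, but there it terminates with the cop landing on the robber, so the cop wins in either case. Your route costs roughly half a page of routine pursuit argument and buys independence from the cited preprints; the paper's route buys brevity, explicitly noting that only the easy direction of the cited equivalence is needed.
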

\begin{proof}
    If at least one end of $G$ is thick, then $\wcop(G)=\infty$ by Theorem \ref{thm:main}.
    If all ends of $G$ are thin, then $G$ is quasi-isometric to a tree (by \cite{zbMATH04139775}), and therefore $\wcop(G)=1$ (by \cite{appenzeller2025copsrobbershyperbolicvirtually} or \cite{esperet2025coarsecopsrobbergraphs}, though only the `easy' direction that quasi-isometric to a tree implies $\wcop(G)=1$ is needed). 
\end{proof}

\bibliographystyle{abbrv}
\bibliography{sources}

\end{document}